 \newtheorem{thm}{Theorem}[section]
 \newtheorem{cor}[thm]{Corollary}
 \newtheorem{lem}[thm]{Lemma}
 \theoremstyle{definition}
 \theoremstyle{remark}
 \numberwithin{equation}{section}
\begin{document}

%
%
%
%
%
%
%
%
%

\title{A convex cover for closed unit curves has area at least 0.1}

\author{Bogdan Grechuk}

\address{
School of Mathematics and Actuarial Science\\
University of Leicester, Leicester\\
LE1 7RH\\
United Kingdom}

\email{bg83@leicester.ac.uk}

\author{Sittichoke Som-am}
\address
{School of Mathematics and Actuarial Science\\
University of Leicester, Leicester\\
LE1 7RH\\
United Kingdom}
\email{ssa41@leicester.ac.uk}
\subjclass{Primary 52C15; Secondary 52A38}

\keywords{Worm problem, Convex cover, Convex hull, Minimal-area cover, Closed curves}

\date{\today}

\begin{abstract}
We improve a lower bound for the smallest area of convex covers for closed unit curves from $0.0975$ to $0.1$, which makes it substantially closer to the current best upper bound $0.11023$. We did this by considering the minimal area of convex hull of circle, line of length $\frac{1}{2}$, and rectangle with side $0.1727\times 0.3273$. By using geometric methods and the Box search algorithm, we proved that this area is at least $0.1$. We give informal numerical evidence that the obtained lower bound is close to the limit of current techniques, and substantially new idea is required to go significantly beyond $0.10044$.
\end{abstract}

\maketitle
\section{Introduction}

Moser's worm problem is one of many unsolved problems in geometry posed by Leo Moser [11] in 1966 (see [12] for a new version). It asked about the smallest area of region $S$ in the plane which can be rotated and translated to cover every unit arc. While this problem is still open, lower and upper bounds for this smallest area are available in the literature. The current best upper bound $0.260437$ was established by Norwood and Poole [13]. For the lower bound, it is known only that the area of $S$ is strictly positive [10].

This problem can be modified by restricting region $S$ (e.g. triangle, rectangle, convex, non-convex, ect.) or restricting unit curves (e.g. closed, convex, ect.). Numerous problems in the worm family have been solved prior, see Wetzel [17] for a survey.

If we require $S$ to be convex, and cover arbitrary unit curves, the existence of the solution is shown by Laidecker and Poole [9] using Blachke Selection Theorem. In 2019, Panraksa and Wichiramala [14] showed that the Wetzel's sector is a cover for unit arcs which has an area of $\pi/12 \approx 0.2618$. This cover is the current upper bound for this problem, whereas the current lower bound $0.232239$ was found by Khandhawit, Sriswasdi and Pagonakis [8] in 2013. 

The problem of finding smallest convex region $S$ to cover all (not necessarily unit) curves of \emph{diameter} one was posed by Henri Lebesgue in 1914 and is known as Lebesgue's universal covering problem. In this case, the current best lower and upper bounds are $0.832$ and $0.8440936$ due to Peter Brass and Mehrbod Sharifi [1] and Philip Gibbs [6], respectively.

In this paper, we consider the problem of covering {\bf \emph{closed unit}} curves by a {\bf \emph{convex}} region of the smallest area.
 
In 1957, H.G. Eggleton [3] showed that the equilateral triangle of side ${\displaystyle \frac{\sqrt{3}}{\pi}}$ is the smallest triangle which can cover all unit closed curves and its area is $\frac{3\sqrt{3}}{4\pi^2}\approx 0.13162$. Twenty five years later, the smallest rectangle with lengths $\frac{1}{\pi}$ and $\sqrt{\frac{1}{4}-\frac{1}{\pi^2}}$ is found by Shaer and Wetzel [15] and its area is about $0.12274$.       

Furedi and Wetzel [5] decreased this area by clipping one corner of this rectangle with area about $0.11222$. Also, they modified this pentagon to a curvilinear hexagon which has an area of less than $0.11213$. In 2018, Wichiramala [18] showed that the opposite corner of the pentagon can be clipped to be a hexagonal cover with area less than $0.11023$ which is the best currently known bound. 

Our work focuses on a lower bound for this problem. Chakerian and Klamkin [2] applied F{\'a}ry and R{\'e}dei's theorem [4] to find the first lower bound which is $0.0963275$ by considering the convex hull of circle and line segment in 1973. Further, Furedi and Wetzel [5] 
showed that the minimum area of convex hull of circle and the $0.0261682\times0.4738318$ rectangle has area $0.0966675$. Moreover, they replaced this rectangle by curvilinear rectangle and give a new lower bound about $0.096694$.

To improve this bound, one may consider the minimal convex hull of three given closed curves. However, in this case F{\'a}ry and R{\'e}dei's theorem [4] can not be applied to find smallest area analytically, which complicates the analysis and call for the mix of geometric and numerical methods. Som-am [16] applied Brass grid method [1] to prove that the area of convex hull of circle, line segment ,and equilateral triangle is at least $0.096905$. In 2019, Grechuk and Som-am [7] used the Box search method to show that the minimal-area convex hull of the rectangle of perimeter $1$, circle with perimeter $1$, and the equilateral triangle with perimeter $1$, is greater than $0.0975$. 
Thus, if we denote $\alpha$ to be the area of smallest cover of closed unit curves, then
$$
0.0975 \leq \alpha \leq 0.11023.
$$ 

In this paper, we use geometric methods combined with the Box search algorithm to prove that the area of convex cover for the line segment, circle, and certain rectangle of perimeter $1$ is at least $0.1$.

\begin{thm}\label{th:main}
The area of convex cover $S$ for circle of perimeter $1$, line of length $1/2$, and rectangle of size $0.1727\times0.3273$ is at least $0.1$.
\end{thm}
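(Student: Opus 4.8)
The plan is to fix a convex body $S$ that simultaneously contains a congruent copy of the circle $C$ of perimeter $1$ (radius $r=\frac{1}{2\pi}$), a unit-length-$\tfrac12$ segment $L$, and the $0.1727\times0.3273$ rectangle $R$, and to show $\operatorname{area}(S)\ge 0.1$ by bounding below the area of the convex hull $H=\operatorname{conv}(C\cup L\cup R)$ over all relative placements. Since $S\supseteq H$, it suffices to minimize $\operatorname{area}(H)$. Without loss of generality I would normalize the position of the circle, placing its center at the origin, and then parametrize the placement of $L$ by the position of its midpoint and its direction angle, and the placement of $R$ by the position of its center and its orientation angle — a bounded finite-dimensional parameter space after using the symmetries (reflections, and the rotational symmetry of $C$) to restrict the ranges of the angles and of one coordinate.

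The key geometric step is to derive, for each configuration, a closed-form lower bound for $\operatorname{area}(H)$ built from pieces whose areas are elementary: the disk bounded by $C$ contributes $\pi r^2=\frac{1}{4\pi}\approx 0.0796$, and the convex hull of a disk together with an external point (or an external segment, or an external rectangle) adds extra "cap" regions consisting of triangles spanned by external vertices and circular segments between the two tangent lines from those vertices to $C$. Concretely, I would express $\operatorname{area}(H)$ as $\pi r^2$ plus a sum of such tangent-cone contributions from the extreme vertices of $L$ and $R$ that lie outside the disk, each of which is an explicit function of the distance from that vertex to the center and of the angular gap it subtends; the bound must also incorporate the constraint that $L$ and $R$ themselves fit inside $H$, which forces certain vertices to be far enough from the center or from each other. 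This reduces Theorem~\ref{th:main} to a concrete inequality $f(\text{parameters})\ge 0.1$ on a compact box.

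The final step is to verify this inequality rigorously with the Box search algorithm of Grechuk and Som-am~[7]: subdivide the parameter box into small sub-boxes, on each sub-box bound $f$ from below using interval arithmetic / Lipschitz estimates on the elementary functions appearing in the formula, discard any sub-box on which the lower bound already exceeds $0.1$, and recurse on the rest; the computation terminates with all sub-boxes discarded, certifying $f\ge 0.1$ everywhere. I expect the main obstacle to be the geometric bookkeeping in the second step rather than the search itself: the combinatorial type of the convex hull $H$ (which vertices of $R$ and which endpoint of $L$ are "active" on the boundary, and whether $L$ or an edge of $R$ cuts across a tangent cone of the circle) changes across the parameter space, so one must either case-split cleanly into finitely many combinatorial regimes, each with its own valid area formula, or — more robustly — find a single lower bound formula valid across all regimes, at the cost of some slack. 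Making that slack small enough that the bound still clears $0.1$ (given that the true minimum is believed to be only around $0.10044$) is the delicate part, and is presumably why the specific rectangle dimensions $0.1727\times0.3273$ were chosen.
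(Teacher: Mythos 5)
Your overall skeleton matches the paper's: fix the circle's center, use its rotational symmetry to remove one orientation parameter, reduce to a five-parameter minimization of the hull area, and certify $f\geq 0.1$ by a subdivision (box) search with rigorous per-box lower bounds. But the two steps you treat lightly are exactly where the paper does its real work, and as written your plan has a gap at both. First, the restriction to a \emph{compact} parameter box does not follow from symmetries: the translation parameters of $L$ and $R$ are a priori unbounded, and one must prove that far placements already force area $>0.1$. The paper does this in Lemma \ref{lem:conditions} using the F\'ary--R\'edei theorem (the hull area is a convex function of each translation), which yields explicit cutoffs $0\leq x_1\leq 0.0741$, $0\leq y_1\leq 0.0976$, $|x_2|,|y_2|\leq 0.148$; your remark about vertices being ``far enough from the center'' does not supply this. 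Second, your key middle step --- a closed-form expression (or uniform closed-form lower bound) for the hull area as disk plus tangent-cap contributions, valid across all combinatorial regimes of which vertices of $R$ and endpoints of $L$ are active --- is precisely the part you acknowledge as the main obstacle, and you do not resolve it; moreover your interval/Lipschitz certification presupposes having that formula. With only about $0.0004$ of slack above $0.1$, ``a single lower bound formula valid across all regimes, at the cost of some slack'' is very unlikely to survive, and a clean case split is a substantial unproven piece of work.

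The paper sidesteps this entirely: it replaces $C$ by an inscribed regular $500$-gon $F\subset C$ (so any lower bound for $\mathcal{A}(F\cup R\cup L)$ is a valid lower bound for the circle configuration), evaluates $f$ at box centers as an ordinary polygon convex-hull area, and proves five \emph{global} Lipschitz constants $C_1,\dots,C_5$ on the box $Z$ (Lemma \ref{lem:cont}) --- the translation constants again via F\'ary--R\'edei convexity and the rotation constant via a diameter bound $d(F\cup R)<0.45976$ --- so that the certificate $f(z^*)-\sum_i d_iC_i\geq 0.1$ needs no case analysis of the hull's combinatorial type at all. If you want your route to work, you must either carry out the full combinatorial case analysis with verified formulas in every regime, or prove Lipschitz/monotonicity bounds for the hull area directly (as the paper does), since otherwise the box-search step has nothing rigorous to certify against.
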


By Theorem \ref{th:main}, we have 
$$
0.1 \leq \alpha,
$$
which improves the previous bound $0.0975$. In fact, the true minimal area of $S$ in Theorem \ref{th:main} is about $0.10044$, but we have rounded the bound to $0.1$ to simplify the proof. 

The choice of  line segment, circle, and rectangle in Theorem 1 was not an arbitrary or lucky one. In fact, we have used a systematic search for three shapes which gives the best possible lower bound. 

The organization of this paper is as follows. A systematic search for three shapes which maximal possible area of minimal convex hull is presented in Section \ref{sec:search}. The remaining sections are devoted to the proof of Theorem \ref{th:main}. Section \ref{sec:geom} proves some geometric lemmas. Section \ref{sec:compu} describes numerical box-search algorithm and computational results. The proof of Theorem \ref{th:main} is finished in Section \ref{sec:main}. We give some conclusions in Section \ref{sec:con}.

\section{A systematic search for shape}\label{sec:search}

Theorem \ref{th:main} establishes a new lower bound for the area of a convex cover for closed unit curves, by proving that a convex hull of three such curves, circle, line, and certain rectangle, must have area at least $0.1$. In fact, the choice of these three curves was not an arbitrary or lucky one, but was the result of a systematic numerical search, which we will describe in this section.

This section contains neither proofs nor any form of rigorous analysis, and the reader who is interested only in the proof of Theorem \ref{th:main} can go straight to the next sections. However, this section is important to understand how we selected three curves considered in Theorem \ref{th:main}, and provides some guide were to not look when trying to substantially improve our lower bound.  

For each specific set of unit curves, we write a program which finds minimum area of a convex hull of these curves, where the minimum is taken with respect to all translations and rotations of the curves. We then modify the curves to make this minimal as large as possible. This result is a maximin problem with highly non-smooth and non-convex objective function. We used Matlab function, patternsearch, with random initial data to try to find the global optimal value in various cases. 

Let $\alpha$ be the area of smallest cover of closed unit curves. We will approximate curves by polygons, and it is clear that it suffices to consider convex polygons only. Let us start with 2 polygons. Let $F(X_1,X_2)$ be the area of convex hull of polygons $X_1$ and $X_2$. Let $\mathcal{T}$ be the set of all orientation-preserving motions $T$ of the plane (that is, all possible compositions of rotations and translations). Then, given any two polygons $X_1$ and $X_2$ with unit perimeter, the solution of the minimization problem
\begin{equation}\label{eq:min2objects} 
\min_{T \in \mathcal{T}} F(X_1,T(X_2))
\end{equation}
is the lower bound for $\alpha$. 

Now, our aim to find polygons $X_1$ and $X_2$ for which this lower bound is as large as possible. Let $N_1$ and $N_2$ be a number of vertices in polygons $X_1$ and $X_2$ respectively. We assume that $N_1$ and $N_2$ are fixed but $X_1$ and $X_2$ can vary. Let $\mathcal{X}(N)$ be the set of all convex polygons with $N$ vertices and unit perimeter. We consider optimization problem
\begin{equation}\label{eq:2objects} 
b(N_1, N_2) = \max_{X_1 \in \mathcal{X}(N_1),X_2 \in \mathcal{X}(N_2)}\min_{T \in \mathcal{T}} F(X_1,T(X_2)).
\end{equation}
It is clear that for any $N_1$, $N_2$,
$$
b(N_1, N_2) \leq \alpha,
$$
or, in words, $b(N_1, N_2)$ is a lower bound for $\alpha$.

\subsection{Numerical results for 2 objects}

First, we construct a Matlab function NN2per to solve the minimization problem \eqref{eq:min2objects}. The input of the function is $2N_1 + 2N_2$ coordinates of vertices of $X_1$ and $X_2$. The function first calculates the perimeters of the polygons, and scale them to make the perimeters to be equal to $1$. Then it applies the  motion $T$ to $X_2$, which is described by three parameters: vector of translation $(x_1,y_1)$ and angle of rotation $\theta_1$, and use Matlab function convhull to estimate $F(X_1,T(X_2))$. We then use function MultiStart in Matlab to solve the minimization problem of the resulting convex hull area over parameters $x_1, y_1, \theta_1$.   

Second, we solve the maximization problem \eqref{eq:2objects} by finding maximal possible output of function NN2per for fixed $N_1$ and $N_2$. We start with some initial configuration (for example, $X_1$ is regular $N_1$-gon and $X_2$ regular $N_2$-gon), and then use the patternsearch function. 

We repeat this procedure for various small values of $N_1$ and $N_2$. Specifically, we consider the cases of a line and a triangle (2+3 points), two triangles (3+3 points), a triangle and a rectangle (3+4 points), two rectangles (4+4 points), and so on. The results are shown in Table \ref{table:1}.

\begin{table}
\begin{center}
\begin{tabular}{ |c|c|c| } 
 \hline
 Type ($n+m$ points) & Optimal area & Time (sec) \\ 
 \hline
 2+3 & 0.072169 & 5889 \\ 
 3+3 & 0.072375  & 7995 \\ 
 3+4 & 0.085377 & 14080 \\ 
 4+4 & 0.085377 & 17370 \\ 
3+5   & 0.087902 & 17296 \\ 
5+5   & 0.087902 & 23684 \\ 
9+19 & 0.095790 & 239828 \\ 
 20+20 & 0.096120 & 240258 \\ 
9+50   & 0.096595   & 241172 \\ 
11+50 & 0.096605 & 251041 \\ 
 \hline
\end{tabular}
\end{center}
\caption{The numerical series for 2 objects}
\label{table:1}
\end{table}
From the Table \ref{table:1}, it can be seen that the maximum lower bound we found is $0.0966053$, see Figure \ref{fn1}. This is close to the best known lower bound $0.0966675$ found from considering 2 objects [5]. We can conclude that in this case we did not improve the existing best lower bound but almost recovered the best result in the literature. 
 
\begin{figure}
\begin{centering}
\includegraphics[scale=0.6]{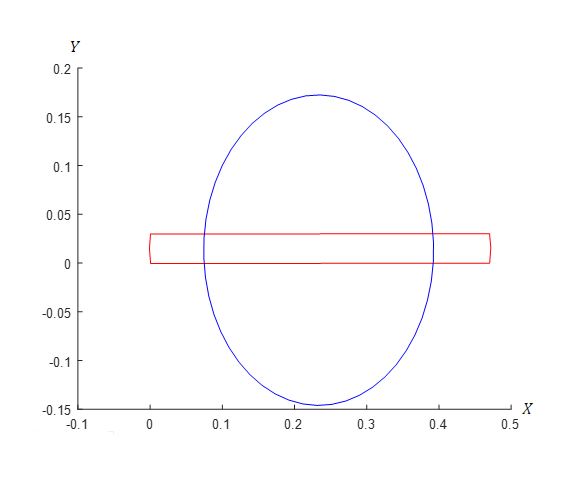}
\par\end{centering}
\caption{The configuration for the $11$ points and $50$ points which has area of $0.0966053$}
\label{fn1}
\end{figure}

\subsection{Numerical results for 3 objects} 

For three objects, we use the same idea as for 2 objects. Let $X_3$ be the third polygon with $N_3$ vertices which can be translated and rotated. We then solve the maximin optimization problem
\begin{equation}\label{eq:3objects} 
b(N_1, N_2, N_3) = \max_{X_1 \in \mathcal{X}(N_1),X_2 \in \mathcal{X}(N_2),X_3 \in \mathcal{X}(N_3),}\min_{T_1 \in \mathcal{T}, T_2 \in \mathcal{T}} F(X_1,T_1(X_2),T_2(X_3)),
\end{equation}
where $F$ now denotes the area of convex hull of three polygons. Again, $b(N_1, N_2, N_3)$ is the lower bound for $\alpha$.

For the internal minimization problem, we now optimize over six parameters which are $x_1,y_1,\theta_1$ and $x_2,y_2,\theta_2$ for translation and rotation of first and second polygons respectively. The maximization problem is now with respect to $2N_1 + 2N_2 + 2N_3$ coordinates of polygon's vertices.

We start the cases of line and two triangles (2+3+3), three triangles (3+3+3), two triangles and a rectangle (3+3+4) and so on. We can see the results in Table \ref{table:2}.

\begin{table}
\begin{center}
\begin{tabular}{ |c|c|c| } 
 \hline
 Type ($n+m+k$ points) & Optimal area & Time (sec) \\ 
 \hline
 2+3+3 & 0.072169 & 25741 \\ 
 3+3+3 & 0.073086  & 29178 \\ 
 2+3+4 & 0.087867 & 35518 \\ 
 3+3+4 & 0.087887 & 38549 \\ 
 3+3+5   & 0.088478 & 51306 \\ 
 10+10+10 & 0.093546 & 208439 \\
2+4+50   & 0.1004   & 327082 \\ 
4+4+50 & 0.100403 & 505971 \\ 
 7+7+50 & 0.100417    & 1022268 \\ 
 \hline
\end{tabular}
\end{center}
\caption{The numerical series for 3 objects}
\label{table:2}
\end{table}

\begin{figure}
\begin{centering}
\includegraphics[scale=0.6]{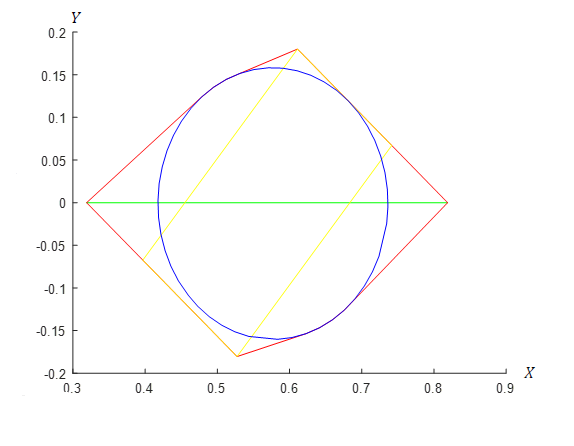}
\par\end{centering}
\caption{The configuration for the $7$ points, $7$ points and $50$ points which has area of $0.100417$}
\label{fn2}
\end{figure}

Table \ref{table:2} shows that the maximum lower bound for $\alpha$ we found is $0.100417$ which is better than the current bound $0.0975$ [7]. From the Figure \ref{fn2}, the three polygons corresponding to this lower bound approximates a circle of perimeter $1$, a line of length $0.5$ and a rectangle of size $0.1727\times0.3273$. 

Inspired by this result, we also try to find three objects in the form: circle, line, and $n$-gon. We represent circle as regular $500$-gon, line as a $2$-gon, and then increased the number of vertices of the third polygon from $3$ to $10$. The results are presented in Table \ref{table:3}.

Because $n-1$-gon is a special case of $n$-gon with coinciding vertices, the lower bound improves by definition, and it is best for $n=10$. However, Figure \ref{fn3} shows that the resulting $10$-gon is very similar to the same rectangle of size $0.1727\times0.3273$ which we have obtained in the $N_1=7, N_2=7, N_3=50$ experiment in Table 2. The resulting bound $0.100473$ is also very close to the bound $0.100417$ in Table 2.

\begin{table}
\begin{center}
\begin{tabular}{ |c|c|c| } 
 \hline
 Type ($n$ points) & Optimal area & Time (sec) \\ 
 \hline
 3 & 0.0970429   & 13508  \\ 
 4 & 0.1003  & 28865 \\ 
 5 & 0.100304 & 33652 \\ 
 6 & 0.100374 & 54979\\ 
 7   & 0.100386 & 80077 \\ 
8   & 0.100390   & 81285 \\ 
9 & 0.100418 & 119103 \\ 
 10 & 0.100473    & 123350 \\ 
 \hline
\end{tabular}
\end{center}
\caption{The numerical series for 3 objects when 2 objects are fixed}
\label{table:3}
\end{table}

\begin{figure}
\begin{centering}
\includegraphics[scale=0.7]{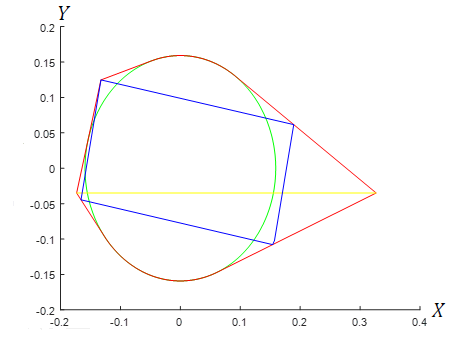}
\par\end{centering}
\caption{The configuration for the $500$-gon, a strength line and $10$ points which has area of $0.100473$}
\label{fn3}
\end{figure}


Based on the numerical experiments in this section, we
\begin{itemize}
\item Have found a line-rectangle-circle configuration, whose convex hull is, numerically, always grater than $0.1$. This significantly improves the previous best lower bound $0.0975$. 
\item Conclude that no other ``simple'' configuration of 3 objects gives a substantially better bounds.
\end{itemize}

In the following section, we give a rigorous prove for the first conclusion, and this is the main result of this work. The second conclusion is informal and has no proof.

\section{Geometric analysis}\label{sec:geom}

Let $C$ be a circle of perimeter $1$, $R$ be a rectangle of size $u \times v$ where $u=0.1727$ and $v=\dfrac{1}{2}-u$, and $L$ be line of length $\dfrac{1}{2}$. 

Let us fix the center of circle to be $C_0(0,0)$. Let $F$ be a regular $500$-gon inscribed in $C$, such that the sides of $R$ are parallel to some longest diagonals of $F$. Let $X$ be a union $F \cup R \cup L$. $X$ is called a \emph{configuration}. Let $\mathcal{H}(X)$ be the convex hull of $X$, and $\mathcal{A}(X)$ be the area of $\mathcal{H}(X)$. Our aim is to find a configuration $X$ with the smallest $\mathcal{A}(X)$.

Let $R_0(x_1,y_1)$ be the center of $R$. By the symmetry of circle, we may assume that $x_1,y_1\geq 0$. We have the vertices of $R$ are $R_{1}\left(x_{1}-v/2,
y_{1}+u/2\right)$, 
$R_{2}\left(x_{1}+v/2,y_{1}+u/2\right)$, 
$R_{3}\left(x_{1}+v/2,y_{1}-u/2\right)$, 
and $R_{4}\left(x_{1}-v/2,y_{1}-u/2\right)$. Let $L_0 (x_{2},y_{2})$ be the center of $L$ and $\theta$ be the angle between X axis and $L_0L_2$, see Figure \ref{f1}.

\begin{figure}
\begin{centering}
\includegraphics[scale=0.7]{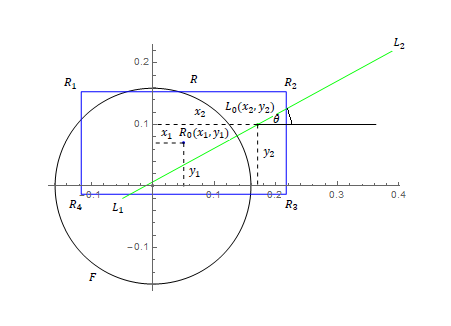}
\par\end{centering}
\caption{The configuration $X$}
\label{f1}
\end{figure}

The vertices of $L$ are $L_{1}(x_{2}+\frac{1}{4}\cos(\theta+\pi),y_{2}+\frac{1}{4}\sin(\theta+\pi))$ and 
$L_{2}(x_{2}+\frac{1}{4}\cos(\theta),y_{2}+\frac{1}{4}\sin(\theta))$. Note that any configuration $X$ is represented by $x_{1},y_{1},x_{2},y_{2}$, and $\theta$. Clearly, $0\leq\theta\leq\pi$

We define the function $f:\mathbb{R}^{5}\rightarrow\mathbb{R}$ by mapping vector $(x_{1},y_{1},x_{2},y_{2},\theta)$ to  $\mathcal{A}(X)$. Clearly, $f$ is a continuous function. Since $F$ is a subset of $C$, it suffices to show that 
$$
f(x_{1},y_{1},x_{2},y_{2},\theta) \geq 0.1, \quad \forall \, x_{1}, y_{1}, x_{2}, y_{2}, \theta.
$$
This would immediately imply Theorem \ref{th:main}.

Next, we will apply result of Fary and Redei [4], and Lemma 1 and Corollary 2 in [7] to bound parameters $x_{1},y_{1},x_{2},y_{2}$ and $\theta$.

\begin{lem}\label{lem:conditions}
Let $Z$ be a region of points $z=(x_{1},y_{1},x_{2},y_{2},\theta)$ in ${\mathbb R}^5$ satisfying the inequalities
$
0\leq x_{1}\leq0.0741,\,\, 0\leq y_{1}\leq0.0976,\,\, -0.148\leq x_{2}\leq0.148,\,\, -0.148\leq y_{2}\leq 0.148,\,\, 0\leq\theta\leq{\displaystyle \pi}.$
If $f(z) > 0.1$ for all $z \in Z$, then in fact $f(z)> 0.1$ for all $z \in {\mathbb R}^5$.
\end{lem}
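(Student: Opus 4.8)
The plan is to bound each of the five parameters $x_1, y_1, x_2, y_2, \theta$ in turn, using only necessary conditions that the configuration $X$ must satisfy in order to have $\mathcal{A}(X) \le 0.1$; any configuration violating one of these bounds automatically has $f(z) > 0.1$, so the verification on the box $Z$ suffices. The angle bound $0 \le \theta \le \pi$ is already established by the geometric setup (the line direction ranges over a half-circle), and $x_1, y_1 \ge 0$, $x_2, y_2 \ge 0$ follow from the reflection symmetry of the circle $C$ about both axes — so the real work is the upper bounds $x_1 \le 0.0741$, $y_1 \le 0.0976$ and the bound $-0.148 \le x_2, y_2 \le 0.148$ (the lower bounds on $x_2,y_2$ presumably come from a further symmetry or are folded in for safety).

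First I would handle the rectangle center $R_0 = (x_1, y_1)$. The convex hull $\mathcal{H}(X)$ contains both the circle $C$ (radius $r = \tfrac{1}{2\pi}$) and the rectangle $R$; in particular it contains the convex hull of $C \cup R$. I would invoke the Fáry–Rédei result (cited in the excerpt) together with Lemma 1 and Corollary 2 of [7], which give an exact or lower-bound formula for the minimal area of the convex hull of a circle and a segment/rectangle as a function of how far the far corners of $R$ stick out beyond $C$. Concretely, if $R_0$ is pushed far enough in the $x$- or $y$-direction that a vertex $R_i$ lies well outside $C$, the convex hull of $C$ and that vertex alone (a circle with an external point — an "ice-cream cone") already has area $> 0.1$; setting that area equal to $0.1$ and solving for the displacement yields the thresholds $0.0741$ and $0.0976$. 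The asymmetry between the $x$- and $y$-thresholds reflects that $R$ has its long side ($v = 0.3273$) along one axis and short side ($u = 0.1727$) along the other, so the relevant corner coordinates $x_1 \pm v/2$ and $y_1 \pm u/2$ reach outside $C$ at different values of $x_1$ versus $y_1$.

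Next I would bound the line center $L_0 = (x_2, y_2)$. Since $\mathcal{H}(X)$ must contain $C$ together with both endpoints $L_1, L_2$ of the segment $L$, and $|L_0 L_i| = \tfrac14$, if $L_0$ is too far from the origin then at least one endpoint lies far outside $C$, forcing the circle-plus-external-point area above $0.1$ by the same Fáry–Rédei / [7] estimate. The worst case is when $L$ points radially outward, so the binding constraint is on $\|L_0\| + \tfrac14$; converting this into a box constraint on each coordinate gives $|x_2|, |y_2| \le 0.148$ (one checks $0.148 + 0.25 = 0.398$ against the radius $r \approx 0.159$, so an endpoint at distance $0.398$ from the center sticks out by about $0.239$, and the resulting cone area exceeds $0.1$). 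Because only $\|L_0\|$ matters here — not the sign of the coordinates — this also gives the lower bounds $-0.148 \le x_2, y_2$.

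The main obstacle I anticipate is not any single estimate but making the reduction airtight: one must verify that the inequalities defining $Z$ are genuinely \emph{necessary} for $\mathcal{A}(X) \le 0.1$, i.e. that in every direction the "circle plus one far-out point" lower bound for $\mathcal{A}(X)$ is monotone in the displacement and crosses $0.1$ exactly at the claimed threshold. This requires the explicit convex-hull-area formula from Fáry–Rédei and from Lemma 1 / Corollary 2 of [7], careful bookkeeping of which corner or endpoint is the extreme one as the parameters vary, and checking that rounding the thresholds (e.g. $0.0741$ rather than the exact root) is done in the safe direction — enlarging $Z$, never shrinking it. Once that monotonicity-and-rounding check is in place, the lemma follows immediately, with the remaining (genuinely hard) work — showing $f(z) > 0.1$ on the compact box $Z$ — deferred to the Box search algorithm of Section \ref{sec:compu}.
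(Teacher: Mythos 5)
Your overall strategy (show that each inequality defining $Z$ is a necessary condition for $\mathcal{A}(X)\le 0.1$, so that outside $Z$ one automatically has $f>0.1$) is the right one and matches the paper's intent, but the quantitative tool you propose — the ``ice-cream cone'' bound, i.e.\ the convex hull of the circle with a \emph{single} far-out vertex or endpoint — is not strong enough to reach the stated thresholds, so the proof as proposed has a genuine gap. Concretely, with $r=\tfrac{1}{2\pi}$ the cone area is $\pi r^2 + r\sqrt{d^2-r^2} - r^2\arccos(r/d)$, and at the claimed boundary $x_1=0.0741$, $y_1=0$ the far rectangle corner sits at distance $d\approx 0.253$ from the center, giving a cone area of only about $0.088$; at $y_1=0.0976$, $x_1=0$ one gets about $0.0875$. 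Solving ``cone area $=0.1$'' would instead force the corner out to $d\approx 0.34$, i.e.\ $x_1\gtrsim 0.165$, leaving the whole range $0.0741<x_1<0.165$ (and similarly for $y_1$) uncovered, so the lemma with the bounds $0.0741$ and $0.0976$ would not follow. The paper avoids this by working with the hull of the circle and the \emph{entire} rectangle: by F\'ary--R\'edei the hull area $\psi(x_1,y_1)$ is convex in each translation coordinate, and combined with the reflection symmetry $\psi(x_1,y_1)=\psi(x_1,-y_1)$ this reduces the claim to the two boundary evaluations $\psi(0.0741,0)>0.1$ and $\psi(0,0.0976)>0.1$, which hold (barely) because \emph{both} far corners contribute.

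The same problem occurs for the segment. The binding orientation is not the radial one you identify but the perpendicular one: for fixed $\|L_0\|$ the hull of circle and segment is \emph{smallest} when $C_0L_0\perp L_1L_2$ (this is exactly the F\'ary--R\'edei reduction the paper invokes), and in that position with $\|L_0\|=0.148$ each endpoint is only at distance $\approx 0.29$ from the center, so your single-endpoint cone gives $\approx 0.093<0.1$. The paper handles this case with a two-endpoint estimate: the hull contains the trapezoid with parallel sides a near-diameter $EK$ of $F$ (length $\ge 2r\cos(\pi/500)$) and the full segment $L_1L_2$ (length $0.5$), of height $\sqrt{x_2^2+y_2^2}\ge 0.148$, plus half of $F$, giving $\tfrac12(0.5+2r\cos(\pi/500))\cdot 0.148+\tfrac12 S(F)>0.1$. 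So to repair your argument you must replace every ``circle plus one extreme point'' estimate by an estimate that uses both extreme corners of $R$ (after the convexity-and-symmetry reduction to $y_1=0$, resp.\ $x_1=0$) and both endpoints of $L$ (after reduction to the perpendicular position); your remarks about monotonicity and safe rounding are sensible but do not substitute for this missing quantitative step. A minor additional point: the reason $x_2,y_2$ range over $[-0.148,0.148]$ rather than $[0,0.148]$ is that the reflection symmetry has already been spent on fixing the sign of $(x_1,y_1)$, so the line's center cannot be independently reflected.
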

\begin{proof}
Let $\psi(x_{1}, y_{1})$ be the area of convex hull of $F$ and $R$ only. By  [4], $\psi$ is a convex function in both coordinates. By symmetry, $\psi(x_{1}, y_{1}) = \psi(x_{1}, -y_{1})$. This combined with convexity implies that
$$
\psi(x_{1}, y_{1}) \geq \psi(x_{1}, 0) \geq \psi(0.0741, 0) > 0.1,
$$  
whenever $x_{1} \geq 0.0741$. Similarly,
$$
\psi(x_{1}, y_{1}) \geq \psi(0, y_{1}) \geq \psi(0, 0.0976) > 0.1,
$$ 
whenever $y_{1} \geq 0.0976$.

Let $\Phi(x_2,y_2)$ be the area of convex hull of $F$ and $L$ only. By  [4], if $L$ moves in direction $\overrightarrow{L_1L_2}$, $\Phi(x_2,y_2)$ attains minimum when $C_0L_0$ is perpendicular to $L_1L_2$.  
Assume that $|x_{2}|\geq 0.148$ or $|y_{2}|\geq 0.148$. Then $\sqrt{x_{2}^{2}+y_{2}^{2}} \geq 0.148$. 

\begin{figure}[ht]
\begin{centering}
\includegraphics[scale=0.6]{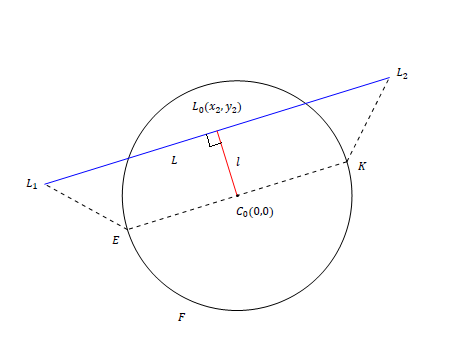}
\par\end{centering}

\caption{$EKL_{1}L_{2}$ and $l$}
\label{fig:f2}
\end{figure}

 Let $l$ be the line segment between $C_{0}$ and $L_{0}$. Next, let points  $E,K \in F$ be such that line segments $C_{0}L_{0}$ and $EK$ are perpendicular, see Figure \ref{fig:f2}. Then $EKL_{1}L_{2}$ is trapezoid with bases lengths $|EK|\geq 2r \cos\left(\frac{\pi}{500}\right)$ and $|L_{1}L_{2}|=0.5$, where $r=\frac{1}{2\pi}$. The area of $F$ is $S(F)=500\frac{r^2}{2}\sin\left(\frac{2\pi}{500}\right)$.
Thus, 
$$
\mathcal{A}(X)>{\displaystyle\frac{1}{2}\left(\frac{1}{2}+2r \cos\left(\frac{\pi}{500}\right)\right)\sqrt{x_{2}^{2}+y_{2}^{2}}}+\frac{S(F)}{2} >0.1
$$
\end{proof}

\begin{lem}\label{lem:bigrange}
Either $f(z)> 0.1$, or $F \cup L \cup R$ is a subset of a rectangle with side lengths $0.439 \times 0.636$. 
\end{lem}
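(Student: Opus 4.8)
The plan is to use the previous lemma to reduce to the bounded box $Z$, and then on that box control the size of the configuration directly. Concretely, by Lemma~\ref{lem:conditions} we may assume $z \in Z$, so that $0 \le x_1 \le 0.0741$, $0 \le y_1 \le 0.0976$, $|x_2|, |y_2| \le 0.148$, and $0 \le \theta \le \pi$; if $f(z) > 0.1$ we are in the first alternative, so we only need to handle the case $f(z) \le 0.1$ with $z \in Z$. The goal is then to show that under these constraints the whole set $F \cup L \cup R$ fits inside an axis-aligned rectangle of dimensions $0.439 \times 0.636$ (the longer side along the $x$-axis, matching the long side $v$ of $R$).

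First I would bound the horizontal extent. The circle $C \supseteq F$ has radius $r = \frac{1}{2\pi} \approx 0.15915$, so $F$ lives in $[-r,r]$ horizontally; the rectangle $R$ has $x$-coordinates in $[x_1 - v/2,\, x_1 + v/2]$ with $v = \tfrac12 - u = 0.3273$, hence in $[-0.16365,\, 0.0741 + 0.16365] = [-0.16365,\, 0.23775]$; and the segment $L$, centered at $(x_2,y_2)$ with half-length $1/4$, has $x$-coordinates in $[x_2 - \tfrac14,\, x_2 + \tfrac14] \subseteq [-0.398,\, 0.398]$. Taking the union, the $x$-extent is contained in $[-0.398, 0.398]$, a window of width $0.796$, which is already a little over $0.636$ — so the crude bound on $L$ alone is not enough, and here is where the real work lies: I must use the hypothesis $f(z) \le 0.1$ to keep $L$ from sticking out too far. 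The key inequality is the trapezoid estimate from the proof of Lemma~\ref{lem:conditions}: if $L$ protrudes a signed distance $d$ beyond the circle in some direction, then $\mathcal{A}(X) \ge \tfrac12\bigl(\tfrac12 + 2r\cos(\tfrac{\pi}{500})\bigr) d + \tfrac{S(F)}{2}$ once $d$ exceeds the relevant value, and a similar convex-hull-with-a-point area bound applies when $L$ pokes out near a corner. Requiring this to be $\le 0.1$ forces $d$ to be small — on the order of a few hundredths — which pins $L_1, L_2$ into a neighbourhood of the disc and collapses the $x$-window down to roughly $[-0.40 + \text{(gained margin)},\, 0.24]$, i.e. width below $0.439$. (After relabelling the axes so the long direction is horizontal, the $0.439$ and $0.636$ get assigned to the two directions appropriately.)

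Next I would bound the vertical extent by the same method: $F$ contributes $[-r, r]$; $R$ contributes $[y_1 - u/2,\, y_1 + u/2] \subseteq [-0.08635,\, 0.0976 + 0.08635] = [-0.08635,\, 0.18395]$, comfortably inside $[-r,r]$ enlarged a touch; and $L$ again contributes $[y_2 - \tfrac14,\, y_2 + \tfrac14]$, controlled by the same protrusion-vs-area argument as above. Combining the two coordinate bounds, $F \cup L \cup R$ sits inside a rectangle whose side lengths are at most the claimed $0.439$ and $0.636$. The bookkeeping is to check that the numerical slack in each direction genuinely closes — that the area penalty incurred by any protrusion large enough to violate the $0.439 \times 0.636$ box already pushes $f(z)$ above $0.1$.

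The main obstacle is precisely this last point: the naive bounding boxes of $C$, $R$, and $L$ do \emph{not} individually fit in the target rectangle, so the argument is not a soft "union of small sets" estimate — it genuinely needs the area hypothesis $f(z) \le 0.1$ to constrain how far $L$ (and, to a lesser degree, $R$, though $R$ is already essentially inside via Lemma~\ref{lem:conditions}) can stick out. Getting the trapezoidal/triangular area lower bounds sharp enough, in every direction of protrusion and including the awkward case where $L$ escapes near a corner of the disc rather than along a diameter, is the delicate part; everything else is linear arithmetic with the explicit constants $u = 0.1727$, $v = 0.3273$, $r = \tfrac{1}{2\pi}$, and the box bounds from Lemma~\ref{lem:conditions}.
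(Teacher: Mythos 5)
Your overall frame (reduce to $z\in Z$ via Lemma~\ref{lem:conditions}, then bound the horizontal and vertical extents separately, using the area hypothesis where naive bounding boxes fail) matches the paper's, but the quantitative mechanism you propose for both directions is not the one that works, and as sketched it has a genuine gap. For the horizontal direction no area argument is needed at all: the paper simply notes that the extent is $x^*_2-x^*_1$, and either both extreme points lie on $L$, in which case the extent is at most $|L|=0.5$, or one of them lies on $F\cup R$, where $|x|\le 0.2378$ (since $x_1\le 0.0741$, $v/2=0.16365$, $r<0.16$), so the extent is at most $0.398+0.2378<0.636$. Your plan instead tries to use the trapezoid estimate of Lemma~\ref{lem:conditions} to pin the endpoints of $L$ within ``a few hundredths'' of the disc and shrink the $x$-window below $0.439$; but that estimate controls the distance of the midpoint $L_0$ from the center, not the protrusion of an endpoint, and it collapses when $L$ points radially outward. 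Concretely, the convex hull of $F$ and a segment of length $0.5$ through the center has area about $0.096<0.1$ even though an endpoint protrudes roughly $0.09$ beyond the circle, so the hypothesis $f(z)\le 0.1$ applied only to circle--line hull penalties cannot pin $L$ the way you claim (and the sub-$0.439$ horizontal width you assert is neither needed nor obtainable this way).

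The same counterexample breaks your vertical argument: a vertical $L$ through the center gives vertical extent $0.5>0.439$ while the hull of $F\cup L$ is still only about $0.096$, so ``the same protrusion-vs-area argument'' does not give the $0.439$ bound. The ingredient you are missing is the rectangle: since $0\le y_1\le 0.0976$, the lowest point of the configuration lies below $R$, and the hull contains $R$ (area $uv$) together with two disjoint triangles of base $v=0.3273$ reaching the extreme points $Y_1,Y_2$, so $\mathcal{A}(X)\ge uv+\tfrac12 v\,(y^*_2-y^*_1-u)$, which exceeds $0.1$ as soon as $y^*_2-y^*_1>0.439$ (numerically $0.0565+0.0436>0.1$). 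So the area hypothesis is used only in the $0.439$ direction, and there via the rectangle's width, not via the circle; without this step your proposal does not close, and the axis bookkeeping (which direction gets $0.439$ versus $0.636$) is also inverted relative to what the later Lipschitz estimates in Lemma~\ref{lem:cont} rely on.
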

\begin{proof}
By Lemma \ref{lem:conditions}, we can assume that $z = (x_{1},y_{1},x_{2},y_{2},\theta) \in Z$. 
Let $Y_1$ and $Y_2$ be the points of configuration $X=F \cup L \cup R$ with the lowest and highest $y$-coordinates $y^*_1$ and $y^*_2$, respectively. Because $0 \leq y_1 \leq 0.0976, Y_1$ is below $R$. Let $h_1, h_2$ be the height from $Y_1, Y_2$ to $R$, respectively. $h_2=0$ if $Y_2$ is below or on $R_1R_2$. Let $y^*_2-y^*_1>0.439$. We have $\mathcal{A}(X) > u(\frac{1}{2}-u)+\frac{1}{2}(\frac{1}{2}-u)(h_1+h_2)=u(\frac{1}{2}-u)+\frac{1}{2}(\frac{1}{2}-u)(y^*_2-y^*_1-u)>0.1$ see Figure \ref{fig:f3}

\begin{figure}[ht]
\begin{centering}
\includegraphics[scale=0.3]{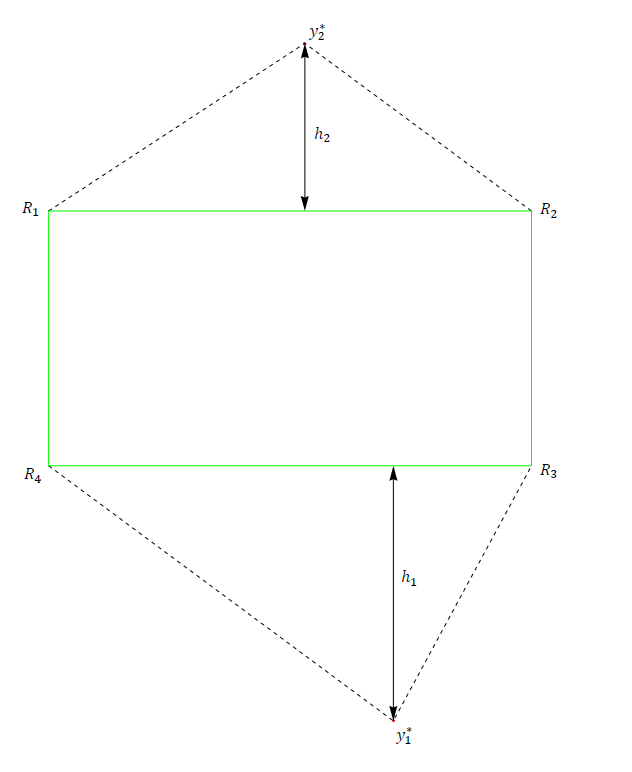}
\par\end{centering}

\caption{The configuration of $y^*_1,y^*_2$ and $R$}
\label{fig:f3}
\end{figure}

Let $X_1$ and $X_2$ be the points of configuration $X=F \cup T \cup R$ with the lowest and highest $x$-coordinates $x^*_1$ and $x^*_2$, respectively. Let $w=(x,y)$ be any point in $X$. If $w$ belongs to $F$, then $|x|<0.159$. Lemma \ref{lem:conditions} implies that if $w$ is any point on rectangle, then $|x|<0.2378$ and if $w$ is any point on line, then $|x|<0.398$. If $X_1,X_2 \in L$, then $x^*_2-x^*_1\leq 0.5$. Otherwise, $x^*_2-x^*_1\leq |x^*_2|+|x^*_1|<0.398+0.2378<0.636$ see Figure \ref{fig:f4}. 

\begin{figure}[ht]
\begin{centering}
\includegraphics[scale=0.7]{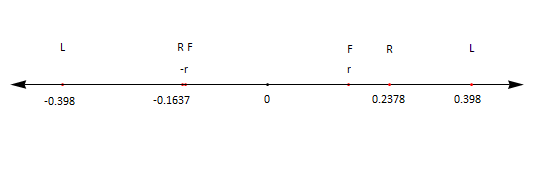}
\par\end{centering}
\caption{The line shows the optimum possible position of $F, R, L$ }
\label{fig:f4}
\end{figure}

\end{proof}

We prove Lipschitz continuity of $f$ in $Z$ in the following lemma

\begin{lem}\label{lem:cont}
For every $(x_{1},y_{1},x_{2},y_{2},\theta) \in Z$, and any $\epsilon_i \geq 0$, $i=1,\dots,5$,
$$
|f(x_{1}+\epsilon_{1},y_{1}+\epsilon_{2},x_{2}+\epsilon_{3},y_{2}+\epsilon_{4},\theta+\epsilon_{5})-f(x_{1},y_{1},x_{2},y_{2},\theta)| \leq \sum_{i=1}^5 \epsilon_i C_i,
$$
with constants 
$C_{1}=0.306$, $C_{2}=0.443$ , $C_{3}=0.392$ , $C_{4}=0.449$, and $C_{5}=0.115$.

\end{lem}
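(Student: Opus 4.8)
The plan is to establish the Lipschitz estimate coordinate by coordinate, bounding the partial change in $\mathcal{A}(X)$ when only one of the five parameters is perturbed by $\epsilon_i \geq 0$, and then summing. The key structural fact is that moving a single curve (rectangle or line) by a small amount, or rotating the line by a small angle, displaces each of its points by a controlled distance, and the area of a convex hull is monotone and changes by at most the ``swept'' region. Concretely, if a configuration $X'$ is obtained from $X$ by replacing one sub-curve $K$ by $K'$ with every point of $K'$ within distance $\delta$ of the corresponding point of $K$, then $\mathcal{H}(X')$ differs from $\mathcal{H}(X)$ by adding/removing a region contained in a $\delta$-neighborhood of part of the boundary of $\mathcal{H}(X)$, so the area changes by at most $\delta$ times (roughly) the relevant boundary length. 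The main work is to pin down, for each of the five parameters, the right ``effective length'' multiplying the displacement, and Lemma \ref{lem:bigrange} is exactly what supplies those lengths: since $F\cup L\cup R$ fits in a $0.439\times0.636$ rectangle, the perimeter of $\mathcal{H}(X)$ is at most that of this rectangle, $2(0.439+0.636)=2.15$, and only the portion of the hull boundary ``facing'' the direction of motion contributes.

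First I would treat the translations of the rectangle, parameters $x_1$ (constant $C_1$) and $y_1$ (constant $C_2$). Perturbing $x_1$ by $\epsilon_1$ translates all four vertices $R_1,\dots,R_4$ horizontally by $\epsilon_1$; the new hull $\mathcal{H}(X')$ is contained in $\mathcal{H}(X)$ together with the Minkowski sum of (the part of $\mathcal{H}(X)$ near $R$) with a segment of length $\epsilon_1$, and symmetrically $\mathcal{H}(X)$ is contained in $\mathcal{H}(X')$ plus such a strip, so $|f(X')-f(X)| \leq \epsilon_1 \cdot \ell_x$ where $\ell_x$ is the vertical extent of the hull, bounded via Lemma \ref{lem:bigrange} by $0.439$ — but one gets the sharper $C_1 = 0.306$ by noting the rectangle is only $u=0.1727$ tall and the hull ``narrows'' there, so the swept area is at most the vertical span of the hull restricted to the $x$-range where $R$ is extremal; a careful accounting of which hull edges actually move gives $0.306$. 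Likewise for $y_1$: the horizontal extent relevant to a vertical shift of $R$ is governed by $v = 0.3273$ plus the circle's contribution, yielding $C_2 = 0.443$. The same two arguments applied to the \emph{translation} of $L$ (an object of length $1/2$, so a bit longer than the rectangle's relevant sides) give $C_3 = 0.392$ and $C_4 = 0.449$.

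Next I would handle the rotation angle $\theta$, constant $C_5 = 0.115$. Rotating $L$ about its center $L_0$ by $\epsilon_5$ moves each endpoint $L_1,L_2$ along a circular arc of radius $1/4$, hence by at most $\tfrac14\epsilon_5$ (for small $\epsilon_5$; more precisely by $\tfrac12\sin(\epsilon_5/2) \leq \epsilon_5/4$). By the same convex-hull swept-area bound, $|f(X')-f(X)| \leq \tfrac14\epsilon_5 \cdot \ell$, where $\ell$ is now the length of the part of $\partial\mathcal{H}(X)$ that can be affected by moving the two line-endpoints — at most two short portions of the hull boundary near $L_1$ and $L_2$. Bounding $\ell$ crudely by the full hull perimeter $2.15$ already gives a constant around $0.54$, which is too weak; the value $0.115$ comes from observing that only the endpoints move (the midpoint is fixed) and that each endpoint's excursion affects a hull arc of length at most about $0.23$ on each side, and that the two contributions partially cancel since one endpoint moves ``out'' while the other moves ``in.'' I would make this precise by writing $\mathcal{A}(X') - \mathcal{A}(X)$ as a sum of two triangle-like area changes, one per endpoint, each bounded by $\tfrac12 \cdot \tfrac14\epsilon_5 \cdot (\text{local hull width})$.

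The main obstacle is obtaining the \emph{sharp} constants rather than crude ones: the naive bound ``displacement $\times$ full hull perimeter'' gives constants roughly $4$--$5$ times too large, so the proof must carefully identify, for each parameter, exactly which edges/arcs of $\partial\mathcal{H}(X)$ move and by how much, using the containment from Lemma \ref{lem:bigrange} and the specific dimensions $u=0.1727$, $v=0.3273$, $|L|=1/2$, $r=1/(2\pi)$. A clean way to organize this is: for a perturbation in direction $d$ of magnitude $\delta$ applied to sub-curve $K$, $|\Delta\mathcal{A}| \leq \delta \cdot w_{d^\perp}(\mathcal{H}(X) \cap N)$, where $N$ is the smallest half-plane-type region containing $K$'s extremal points in direction $d$ and $w_{d^\perp}$ is the width perpendicular to $d$; then plug in the worst-case widths over $Z$. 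Once each $C_i$ is verified, the full estimate follows by applying the single-coordinate bound five times along the segment from $(x_1,y_1,x_2,y_2,\theta)$ to $(x_1+\epsilon_1,\dots,\theta+\epsilon_5)$ and using the triangle inequality, which is routine.
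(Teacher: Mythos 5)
There is a genuine gap: you never actually prove the specific constants, and the mechanism you propose for sharpening the crude bound is not valid. The correct crude estimate (via $\mathcal{H}(A\cup(B+\epsilon e_1))\subseteq \mathcal{H}(A\cup B)\oplus[0,\epsilon e_1]$) bounds the change by $\epsilon$ times the \emph{full} extent of the hull perpendicular to the motion, which for $x_1$ is $0.439$, not $0.306$. Your proposed refinement --- that only the part of $\partial\mathcal{H}(X)$ near the moving object's extremal points matters, i.e.\ $|\Delta\mathcal{A}|\leq\delta\cdot w_{d^\perp}(\mathcal{H}(X)\cap N)$ --- is false, because when the rectangle (or line) moves, the bridge/tangent edges of the hull \emph{pivot} and sweep area along their entire length, not just near the moving object. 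A concrete counterexample to your principle: for the circle of radius $r=1/2\pi$ and a distant translate of the $u\times v$ rectangle, the hull is asymptotically a trapezoid with parallel sides $2r$ and $u$, so the rate of area growth per unit of $x_1$ is about $r+u/2\approx 0.245$, strictly larger than the rectangle's own height $u\approx 0.173$ that your ``local width'' bound would give; with the line present the rate can approach $(0.439+u)/2\approx 0.306$. So ``a careful accounting of which hull edges actually move'' cannot stay below the full-width bound in the way you describe, and the claimed values $0.306,0.443,0.392,0.449$ are left unproven in your argument.

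The paper closes exactly this gap by a different tool: by F\'ary--R\'edei, $f$ is a \emph{convex} function of each translation parameter, and (Lemma 5 of [7]) a convex function's Lipschitz constant is dominated by its asymptotic slope at infinity; that asymptotic slope is easy to compute from the trapezoid picture (average of the widths, perpendicular to the motion, of the moving and fixed parts, the latter bounded by $0.439$ via Lemma \ref{lem:bigrange}), which is precisely how $C_1,\dots,C_4$ arise. Note this only works for translations, which is why the rotation constant $C_5$ is handled separately: there the paper bounds the endpoint displacements by $\epsilon_5/4$ and the area change by four triangles whose bases are hull chords adjacent to $L_1,L_2$, each of length at most $\mathrm{diam}(F\cup R)<0.45976$ (Claim 1), giving $2\cdot\frac12\cdot 0.45976\cdot\frac14\epsilon_5<0.115\,\epsilon_5$ with no cancellation between the two endpoints; your appeal to the contributions ``partially cancelling'' is unsubstantiated and is not needed once the diameter bound is in hand. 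To repair your proof you would either need to import the convexity-plus-asymptotic-slope argument, or supply a genuinely rigorous local analysis of the pivoting bridge edges, which your current sketch does not contain.
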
 
\begin{proof}
Let $g:{\mathbb R}\to{\mathbb R}$ be a convex function on ${\mathbb R}$. 
If
$$
C = \max\left[\lim\limits_{t\to -\infty} \frac{g(t)}{t}, \lim\limits_{t\to +\infty} \frac{g(t)}{t}\right] < \infty,
$$
then
$$
|g(t+\epsilon)-g(t)| \leq C \epsilon, \quad \forall t, \, \forall \epsilon > 0. 
$$
see Lemma 5 in [7].

Let $g(x_1) = f(x_{1},y_{1},x_{2},y_{2},\theta)$, where $y_{1},x_{2},y_{2},\theta$ are fixed.
We have
$$
\lim\limits_{x_1\to -\infty} \frac{g(x_1)}{x_1} = \lim\limits_{x_1\to +\infty} \frac{g(x_1)}{x_1} \leq \frac{0.439+u}{2} < C_1,
$$
where $0.439$ comes from Lemma \ref{lem:bigrange}, while $u$ is the height of $R$,
see Figure  \ref{fig:f5}.
\begin{figure}[ht]
\begin{centering}
\includegraphics[scale=0.7]{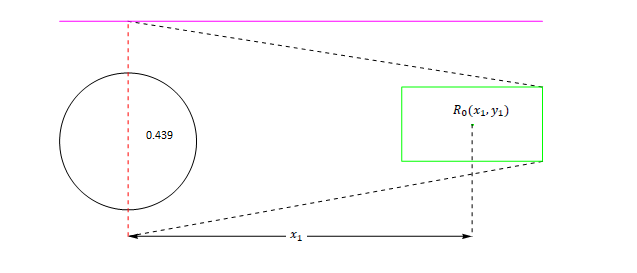}
\par\end{centering}

\caption{The ratio between $g(x_1)$ and $x_1$ when $x_1\to+\infty$}
\label{fig:f5}
\end{figure}

Hence,
$$
|f(x_{1}+\epsilon_{1},y_{1},x_{2},y_{2},\theta)-f(x_{1},y_{1},x_{2},y_{2},\theta)|\leq C_{1}\epsilon_{1}.
$$

Similarly, with $g(y_1) = f(x_{1},y_{1},x_{2},y_{2},\theta)$ for fixed $x_{1},x_{2},y_{2},\theta$, 
$$
\lim\limits_{y_1\to -\infty} \frac{g(y_1)}{y_1} = \lim\limits_{y_1\to +\infty} \frac{g(y_1)}{y_1} \leq \frac{(0.5-u)+(x_2+0.25+r)}{2} < C_2,
$$
where $r=1/2\pi$ and $|x_2| \leq 0.148$,
see Figure \ref{fig:f6},
while with $g(x_2) = f(x_{1},y_{1},x_{2},y_{2},\theta)$,
$$
\lim\limits_{x_2\to -\infty} \frac{g(x_2)}{x_2} = \lim\limits_{x_2\to +\infty} \frac{g(x_2)}{x_2} \leq \frac{0.439 + (y_1+u/2+r)}{2} < C_3.
$$
where $0.439$ come from Lemma \ref{lem:bigrange} and $0 \leq y_1 \leq 0.0976$
see Figure \ref{fig:f7}. Next, with $g(y_2) = f(x_{1},y_{1},x_{2},y_{2},\theta)$,
$$
\lim\limits_{y_2\to -\infty} \frac{g(y_2)}{y_2} = \lim\limits_{y_2\to +\infty} \frac{g(y_2)}{y_2} \leq \frac{0.5 + (x_1+(0.5-u)/2+r)}{2} < C_4.
$$
where $0 \leq x_1 \leq 0.0741$,
\begin{figure}[ht]
\begin{centering}
\includegraphics[scale=0.5]{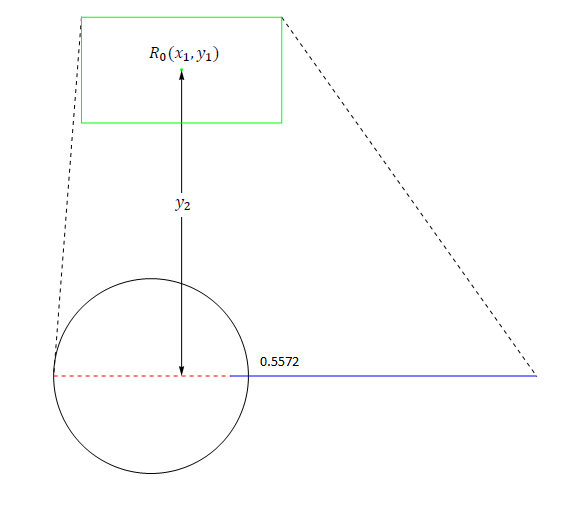}
\par\end{centering}

\caption{The ratio between $g(y_1)$ and $y_1$ when $y_1\to+\infty$}
\label{fig:f6}
\end{figure}

\begin{figure}[ht]
\begin{centering}
\includegraphics[scale=0.5]{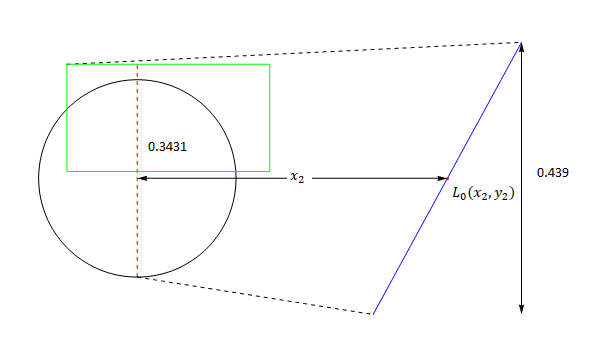}
\par\end{centering}

\caption{The ratio between $g(x_2)$ and $x_2$ when $x_2\to+\infty$}
\label{fig:f7}
\end{figure}

\begin{figure}[ht]
\begin{centering}
\includegraphics[scale=0.3]{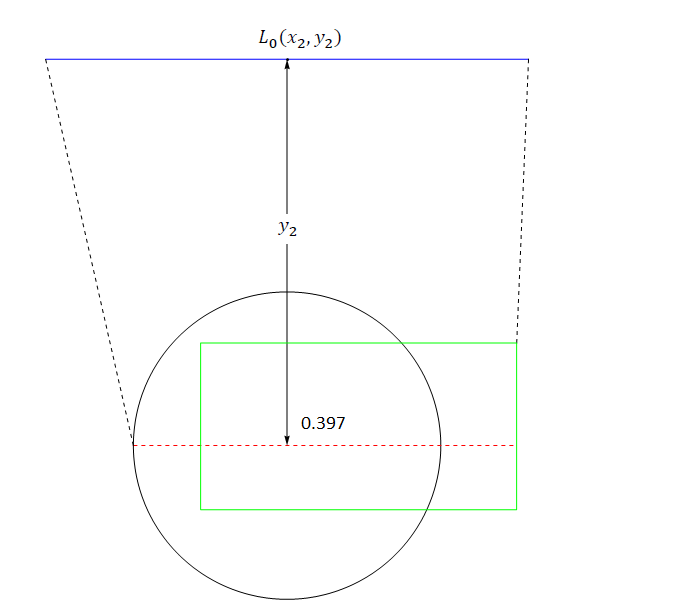}
\par\end{centering}

\caption{The ratio between $g(y_2)$ and $y_2$ when $y_2\to+\infty$}
\label{fig:f8}
\end{figure}

see Figure \ref{fig:f7}. This implies that
$$
|f(x_{1},y_{1}+\epsilon_{2},x_{2},y_{2},\theta)-f(x_{1},y_{1},x_{2},y_{2},\theta)|\leq C_{2}\epsilon_{2},
$$

$$
|f(x_{1},y_{1},x_{2}+\epsilon_{3},y_{2},\theta)-f(x_{1},y_{1},x_{2},y_{2},\theta)|\leq C_{3}\epsilon_{3},
$$
and
$$
|f(x_{1},y_{1},x_{2},y_{2}+\epsilon_{4},\theta)-f(x_{1},y_{1},x_{2},y_{2},\theta)|\leq C_{4}\epsilon_{4}.
$$

Finally, we need to prove that 
\begin{equation}\label{eq:C5}
|f(x_{1},y_{1},x_{2},y_{2},\theta+\epsilon_{5})-f(x_{1},y_{1},x_{2},y_{2},\theta)|\leq C_{5}\epsilon_{5}.
\end{equation} 
To prove the bound for $C_5$, we need the following claim.

{\bf Claim 1.} The diameter $d(\mathcal{F\cup R})$ of $\mathcal{F\cup R}$ is less than $0.45976$. 

By Lemma \ref{lem:conditions}, the diameter $d(\mathcal{F\cup R})$ is maximal possible if $R_0=(0.0741,0.0976)$. Then $R_2=(0.23775,0.18395)$. Let $F_1\in F$ be a point which $d(x,R_2)$ is maximum for all $x\in F$ see Figure \ref{fig:f8}. By direct calculation, $|R_2 R_4|< 0.37007 < |R_2 F_1|$. Hence, the diameter of $\mathcal{F\cup R}$ is $|R_2 F_1| < 0.45976$.
\begin{figure}[ht]
\begin{centering}
\includegraphics[scale=0.4]{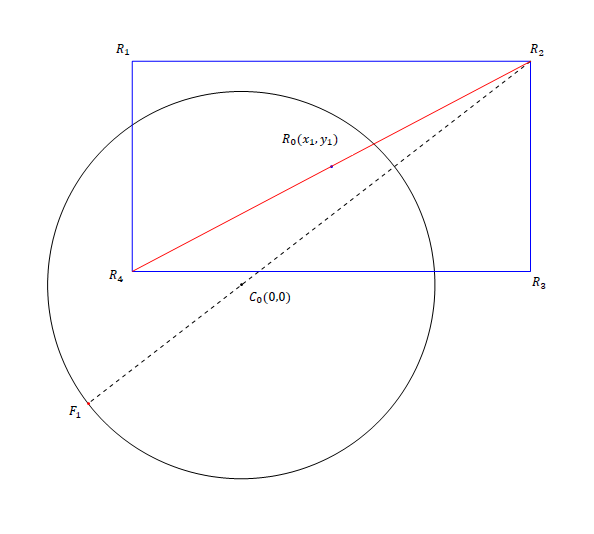}
\par\end{centering}

\caption{The longest between $R_2$ and $F$}
\label{fig:f8}
\end{figure}

Next, we will prove \eqref{eq:C5}. We note that if \eqref{eq:C5} holds for $\epsilon_5=\epsilon$, then it also holds for $\epsilon_5=2\epsilon$. Therefore it is sufficient to prove \eqref{eq:C5} only for sufficiently small $\epsilon_5$.  

Let $L'$ with endpoints $L'_1, L'_2$ be the line $L$ rotated around $L_0$ by angle $\epsilon_{5}$. Then $|L_1L'_1| = 2|L_0L_1|\sin(\epsilon_5/2) < 2|L_0L_1|(\epsilon_5/2)=|L_0L_1|\epsilon_5 = \frac{1}{4}\epsilon_5$. Similarly, $|L_2L'_2|< \frac{1}{4}\epsilon_5$.

By selecting $\epsilon_5$ sufficiently small, we can ensure that all vertices of polygons $\mathcal{H}(R,F,L)$ and $\mathcal{H}(R,F,L')$ coincides, except possible the endpoints of $L$ and $L'$.
Then area difference $|\mathcal{A}(R,F,L')-\mathcal{A}(R,F,L)|$ is bounded by the total area of four triangles $X_1L_1X_2$, $X_1L'_1X_2$, $X_3L_2X_4$, $X_3L'_2X_4$, where $X_i, i=1,2,3,4$ are vertices by polygon $\mathcal{H}(R,F,L)$ adjacent to $L_1,L_2$, see Figures \ref{fig:f9} and \ref{fig:f10}. 
 
Let $h_1,h_2$ be the height of triangle with respect to base $X_1X_2$. Let $h_3,h_4$ be the height of triangle with respect to base $X_3X_4$ see Figure \ref{fig:f10}. By claim 1, We have $|\mathcal{A}(R,F,L')-\mathcal{A}(R,F,L)|\leq |\frac{1}{2}h_1X_1X_2-\frac{1}{2}h_2X_1X_2|+ |\frac{1}{2}h_3X_3X_4-\frac{1}{2}h_4X_3X_4|=\frac{1}{2}X_1X_2|h_1-h_2|+\frac{1}{2}X_3X_4|h_3-h_4| < \frac{1}{2}X_1X_2|L_1L'_1|+\frac{1}{2}X_3X_4|L_2L'_2|\leq 2\times\frac{1}{2}d(\mathcal{F\cup R})\times\frac{1}{4}\epsilon_{5} \leq \frac{1}{4}\times0.45976< 0.115\epsilon_{5} = C_5 \epsilon_{5}$.

\begin{figure}
\begin{centering}
\includegraphics[scale=0.4]{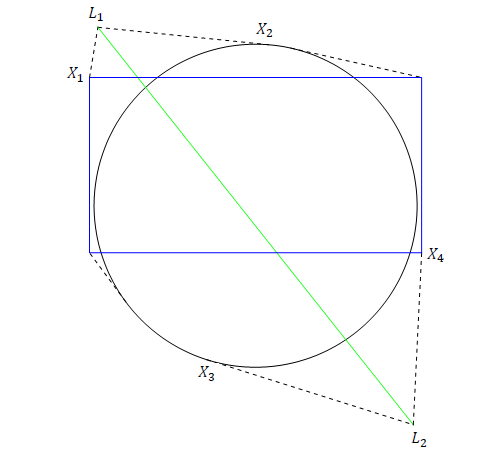}
\par\end{centering}

\caption{Polygon $\mathcal{H}(R, F, L)$ adjacent $L_1,L_2$}
\label{fig:f9}
\end{figure}

\begin{figure}
\begin{centering}
\includegraphics[scale=0.4]{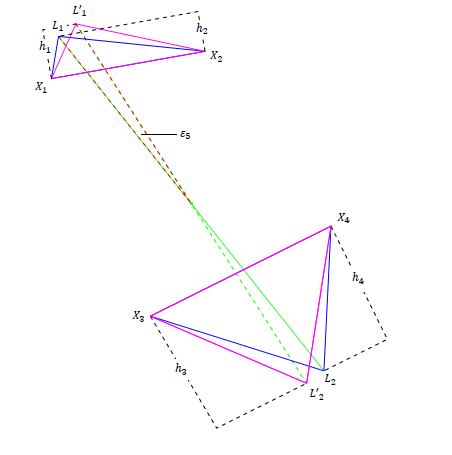}
\par\end{centering}

\caption{Four triangles which $L$ rotated by angle $\epsilon_5$}
\label{fig:f10}
\end{figure}

\end{proof}

\section{Computational results}\label{sec:compu}

Let $Z$ be a region in Lemma \ref{lem:conditions}. In this section, we prove that 
$$
f(z)=f(x_1,y_1,x_2,y_2,\theta)>0.1, \quad \forall z \in Z
$$ 
by using box-search method as described in \cite{grechuksomam2019convex}.
The method works as follows. Let $z^*$ be the center of a box $B$ which has the form $B=[a_{1},b_{1}]\times[a_{2},b_{2}]\times[a_{3},b_{3}]\times[a_{4},b_{4}]\times[a_{5},b_{5}]$. On every step, we check if 
\begin{equation}\label{eq:maincond}
f(z^*)-d_{1}C_{1}-d_{2}C_{2}-d_{3}C_{3}-d_{4}C_{4}-d_{5}C_{5}\geq 0.1,
\end{equation} 
where $d_i$ is a half of the length of $[a_i,b_i]$. If \eqref{eq:maincond} holds, then inequality $f(z) > 0.1$ holds for every $z \in B$ by Lemma \ref{lem:cont}. 

If \eqref{eq:maincond} does not hold, we will select the largest length, say $[a_1,b_1]$ and split $B$ into two boxes $B_1=[a_{1},(a_1+b_1)/2]\times[a_{2},b_{2}]\times[a_{3},b_{3}]\times[a_{4},b_{4}]\times[a_{5},b_{5}]$ and $B_2=[(a_1+b_1)/2,b_{1}]\times[a_{2},b_{2}]\times[a_{3},b_{3}]\times[a_{4},b_{4}]\times[a_{5},b_{5}]$. Then, if \eqref{eq:maincond} holds for both boxes $B_1$ and $B_2$, then $f(z) > 0.1$ holds for every $z \in B_1$ and for every $z \in B_2$, hence it holds for every $z \in B$. If \eqref{eq:maincond} does not hold for either $B_1$ or $B_2$ (or both), we subdivide the corresponding boxes again and proceed iteratively.

We start with $B=Z$, and, when the program halts, we are guaranteed that $f(z)>0.1, \forall z\in Z$.

Let us illustrate the first step of this procedure. 
By Lemma \ref{lem:conditions}, we start with $B=Z=[0,0.0741]\times[0,0.0976]\times[-0.148,0.148]\times[-0.148,0.148]\times[0,\pi]$. Then we check \eqref{eq:maincond} for $z^*=(0.03705, 0.0488, 0, 0, \pi/2)$. In this case, \eqref{eq:maincond} dose not hold because $f(z^*) \approx 0.11851$ and $f(z^*)-\sum\nolimits_{i=1}^5 d_i C_i \approx -0.21937 < 0.1$. Thus, we must select the maximum length to subdivide $B$ into $B_1$ and $B_2$. In this case, $b_5-a_5=\pi \approx 3.14$ is the maximum. Hence, we divide $B$ into
$B_{1}=[a_{1},b_{1}]\times[a_{1},b_{2}]\times[a_{3},b_{3}]\times[a_{4},b_{4}]\times[a_5,(a_5+b_5)/2]$
and
$B_{2}=[a_{1},b_{1}]\times[a_{1},b_{2}]\times[a_{3},b_{3}]\times[a_{4},b_{4}]\times[(a_5+b_5)/2, b_5]$.
Then we check \eqref{eq:maincond} for $B_1$ and for $B_2$ and repeat this procedure iteratively.

\textbf{}\\
We run the Box-search algorithm \cite{grechuksomam2019convex} by using Matlab$\circledR$ R2018a, see implementation details and Matlab code in Appendix. The programme halts after $n = 527,754,566$ iterations. This rigorously proves that the minimal area $\mathcal{A}(X)$  is greater than $0.1$. Numerically, the program returned value $0.10044$ for this minimal area, with optimal configuration $x_{1}=0.00434,\,y_{1}=0.00648,\,x_{2}=0.00434,\,y_{2}=-0.00434,\,\theta=0.85711$ see Figure \ref{fig:f11}.

\section{Main Theorems}\label{sec:main}

\begin{thm} (Theorem \ref{th:main} in the Introduction)
The area of convex cover $S$ for circle of perimeter $1$, line of length $1/2$, and rectangle of size $0.1727\times0.3273$ is at least $0.1$.
\end{thm}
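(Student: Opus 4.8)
The plan is to assemble the geometric reductions of Section~\ref{sec:geom} with the verified computation of Section~\ref{sec:compu}. First I would record the reduction already prepared above: a convex cover $S$ of the three curves must simultaneously contain a congruent copy of the circle $C$, of the rectangle $R$, and of the segment $L$; fixing such copies, $S$ contains their union, and since $F$---a regular $500$-gon inscribed in $C$---lies inside $\mathcal{H}(C)\subseteq S$, the cover $S$ contains the configuration $X=F\cup R\cup L$ and hence its convex hull $\mathcal{H}(X)$. Thus the area of $S$ is at least $\mathcal{A}(X)=f(x_1,y_1,x_2,y_2,\theta)$ for the parameters describing this placement, and it suffices to prove $f(x_1,y_1,x_2,y_2,\theta)\ge 0.1$ for every $(x_1,y_1,x_2,y_2,\theta)\in\mathbb{R}^5$. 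By Lemma~\ref{lem:conditions} this in turn reduces to the strict inequality $f(z)>0.1$ on the compact box $Z$ alone, because any configuration whose parameters lie outside $Z$ already satisfies $f>0.1$.

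Next I would run the Box-search verification on $Z$. The enabling fact is the Lipschitz estimate of Lemma~\ref{lem:cont}: for a box $B\subseteq Z$ with center $z^*$ and half-edge-lengths $d_1,\dots,d_5$ one has $f(z)\ge f(z^*)-\sum_{i=1}^5 d_iC_i$ for all $z\in B$, so a single evaluation of $f$ at $z^*$ together with the test \eqref{eq:maincond} certifies $f>0.1$ on all of $B$. Starting from $B=Z$ and recursively bisecting along the longest edge of any box on which \eqref{eq:maincond} fails, the algorithm generates a finite partition of $Z$ into certified boxes; when it halts---as reported, after $527{,}754{,}566$ iterations---we conclude $f(z)>0.1$ for every $z\in Z$, and then Lemma~\ref{lem:conditions} upgrades this to $f(z)>0.1$ for every $z\in\mathbb{R}^5$. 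In particular $f\ge 0.1$ everywhere, which is the assertion of the theorem.

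The main obstacle is termination of the Box-search, that is, the guarantee that the recursion bottoms out at boxes small enough to pass \eqref{eq:maincond}. This works only because the true minimum of $f$ on $Z$ is strictly larger than $0.1$---numerically about $0.10044$, a margin of only about $4\times 10^{-4}$---and because the Lipschitz constants $C_1,\dots,C_5$ produced by Lemmas~\ref{lem:bigrange}--\ref{lem:cont} are small enough that this margin survives down to boxes of moderate depth; the thinness of the margin is precisely why the stated bound is rounded down from $0.10044$ to $0.1$ and why on the order of half a billion boxes are needed. A secondary, purely practical point, deferred to the Appendix, is that each evaluation of $f$ is a convex-hull-area computation on up to about $506$ points and must be implemented to be both fast and numerically trustworthy, so that every certified inequality \eqref{eq:maincond} is genuinely valid.
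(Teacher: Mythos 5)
Your proposal is correct and follows essentially the same route as the paper: reduce via Lemma~\ref{lem:conditions} to the compact box $Z$, certify $f>0.1$ on $Z$ by the box-search algorithm using the Lipschitz constants of Lemma~\ref{lem:cont}, and then pass from the inscribed $500$-gon $F$ to the circle $C$ via $F\subset C$ to bound the area of the cover. Your added remarks on termination and numerical trustworthiness are sensible commentary but do not change the argument.
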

\begin{proof}
Let $Z$ be a region in Lemma \ref{lem:conditions}. The fact that Box-search algorithm halted together with Lemma \ref{lem:cont} implies that $f(z) > 0.1$ for all $z \in Z$. Then, by Lemma \ref{lem:conditions}, $f(z) > 0.1$ holds for all $z \in {\mathbb R}^5$. Thus, $\mathcal{A}(F,R,L)>0.1$. Since $F\subset C$, $\mathcal{A}(C,R,L) \geq \mathcal{A}(F,R,L)>0.1$.
\end{proof}

\begin{cor}\label{cor:main}
Any convex cover for closed unit curves has area of at least $0.1$.
\end{cor}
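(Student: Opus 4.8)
The plan is to reduce the general statement about arbitrary closed unit curves to the specific three-shape statement of Theorem \ref{th:main}, which has already been proved. The key classical fact I would invoke is that every closed unit curve (a closed curve of length $1$) can be translated and rotated so that it fits inside — equivalently, so that its convex hull is contained in — any fixed convex cover for closed unit curves, but more to the point, I want the converse direction: a convex set $S$ is a cover for all closed unit curves only if, for \emph{every} particular closed unit curve $\gamma$, some rigid motion of $\gamma$ lies inside $S$. In particular this applies simultaneously to the circle of perimeter $1$, to the degenerate ``curve'' consisting of a segment of length $1/2$ traversed back and forth (which is a closed curve of length $1$), and to the boundary of the $0.1727 \times 0.3273$ rectangle, whose perimeter is $2(0.1727+0.3273)=1$.

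First I would make precise that if $S$ is a convex cover for closed unit curves, then $S$ contains a congruent copy of the circle $C$ of perimeter $1$, a congruent copy of the segment $L$ of length $1/2$ (via the degenerate closed curve), and a congruent copy of the rectangle $R$ of size $0.1727 \times 0.3273$. Since $S$ is convex, it then contains the convex hull of the union of these three placed copies. Hence $\mathcal{A}(S) \geq \mathcal{A}(\mathcal{H}(C \cup L \cup R))$ for the particular placement realized inside $S$. Next I would observe that the area of the convex hull of $C$, $L$, $R$ depends only on the relative positions (up to a global rigid motion), i.e.\ on the five parameters $(x_1,y_1,x_2,y_2,\theta)$ studied in Sections \ref{sec:geom}--\ref{sec:compu}; therefore the placement inside $S$ corresponds to some point $z \in \mathbb{R}^5$, and $\mathcal{A}(S) \geq f(z)$. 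By Theorem \ref{th:main} we have $f(z) \geq 0.1$ for all $z$, so $\mathcal{A}(S) \geq 0.1$, which is the claim.

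The only mildly delicate point — the main obstacle, such as it is — is justifying that a convex cover must literally \emph{contain} a congruent copy of each of these three curves, rather than merely covering them in some weaker sense; this is immediate from the definition of ``cover'' (a set into which every closed unit curve can be moved by a rigid motion) together with the convexity of $S$ (so that containing the curve forces containing its convex hull), but one must be careful that the three copies are placed by possibly \emph{different} rigid motions, which is fine because we only need the single worst relative configuration to invoke Theorem \ref{th:main}, and Theorem \ref{th:main} already quantifies over all relative configurations. One should also note the trivial point that a segment of length $1/2$ doubled is a legitimate closed curve of length $1$, so it is covered by $S$, giving the copy of $L$. With these observations the corollary follows in a few lines.
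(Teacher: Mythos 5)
Your proposal is correct and follows essentially the same route as the paper: $S$ must accommodate the circle, the doubled segment of length $1/2$, and the unit-perimeter rectangle, so by convexity $S$ contains the convex hull of these placed copies, whose area is at least $0.1$ by Theorem \ref{th:main} (which already quantifies over all relative placements). Your extra care about the three copies being placed by different rigid motions is a correct reading of what the paper leaves implicit.
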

\begin{proof}
Let $S$ be a convex cover for closed unit curves. Then $S$ can accommodate $C,R$, and $L$, hence $\mathcal{H}(C,R,L)\subset S$. Thus the area of $S$ is at least $\mathcal{A}(F,R,L)$. However, $\mathcal{A}(F,R,L) > 0.1$ by Theorem \ref{th:main}.
\end{proof}

\begin{figure}[ht]
\begin{centering}
\includegraphics[scale=0.2]{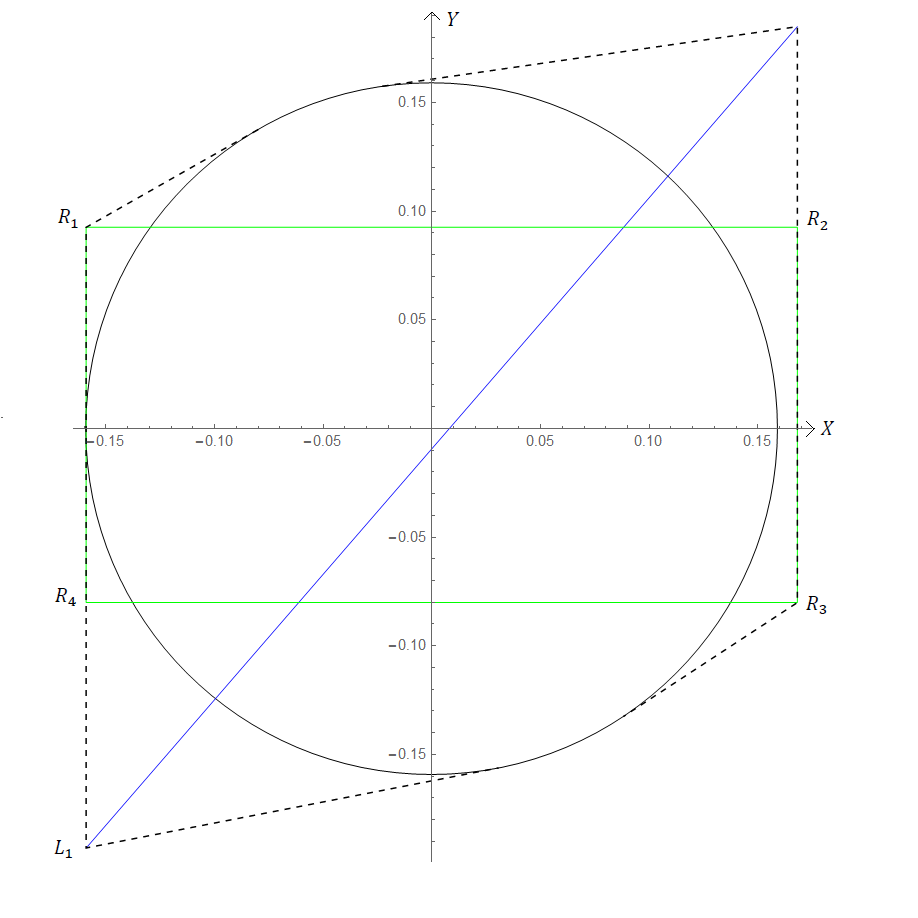}
\par\end{centering}

\caption{The convex hull of the configuration of the minimum
area with $0.10044$ acquired from the box-search algorithm}
\label{fig:f11}
\end{figure}

\section{Conclusion}\label{sec:con}

In this work, we improve the lower bound for the area of convex covers for closed unit arcs from $0.0975$ to $0.1$. First, we used the geometric method to prove Lipschitz bounds for configuration function in $5$ parameters which represent the circle, $0.1727\times0.3273$ rectangle, and line. Next, we used the numerical box-search algorithm developed in \cite{grechuksomam2019convex} to prove the main theorem. In fact, we have used regular $500$-gon in place of circle for convenience of Matlab numerical computations. 

The best bound we can in principle get from circle-line-rectangle configuration is $0.10044$. To improve beyond this, different configurations of objects should be considered. The numerical results in Section \ref{sec:search} suggest that no configuration of three objects can give a bound much better than this. Because considering $4$ and more objects significantly increases the number of parameters and is computationally difficult, it looks like bound $0.1$ (or slightly better) may be the limit of the current technique, and substantially new ideas are required to significantly improve it.

\appendix

\section{Appendix}\label{sec:app}

There are only two functions below which are used in Box-search method. First, function $cvhN2(x_1, x_2, y_1, y_2, \alpha)$ is used to find the area of convex hull for $F, R, L$ which are represented by $x_1, x_2, y_1, y_2, \alpha$. Second, function $checkminNB5$ is used to check the main inequality \eqref{eq:maincond} in a box domain by its parameters $a_1,b_1, \dots, a_5, b_5$. Finally, the Box-search results is shown in A.3.



\subsection{The box's search results}

\begin{figure}
\begin{centering}
\begin{tabular}{|c|c|}
\hline 
Percentage of $r$ & $n$\tabularnewline
\hline 
\hline 
4.5964 \% & 100000000\tabularnewline
\hline 
9.6894 \%  & 200000000\tabularnewline
\hline 
28.4273 \%  & 300000000\tabularnewline
\hline 
95.7067 \%  & 400000000\tabularnewline
\hline 
99.2349  \%  & 500000000\tabularnewline
\hline 
\end{tabular}
\par\end{centering}

\caption{The table of the percentage of $r$ and $n$}\label{fig:table}
\end{figure}

Table \ref{fig:table}  shown the percentage of the area of the initial box for which the inequality \eqref{eq:maincond} is verified and the iteration number every $100,000,000$ steps by Box search program.

\begin{figure}
\begin{centering}
\includegraphics[scale=0.5]{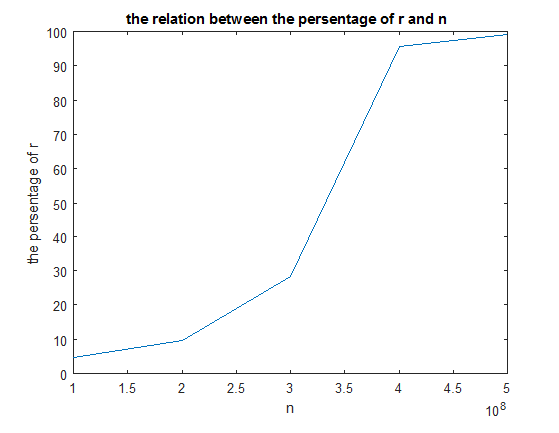}
\par\end{centering}

\caption{The graph of the percentage of $r$ and $n$}\label{fig:rngraph}

\end{figure}

Figure \ref{fig:rngraph} illustrates the graphical process which ups to the number of iterations.

When the program finished, it displayed the result:
[r,n,min,xx1,yy1,xx2,yy2,app] = checkminNB5(0,0.0741,0,0.0976,-0.148,0.148,-0.148,0.148,0,pi,0.1,\\
0.0000001,0,0,0.11,0,0,0,0,0)

r = 0.001990679394226 (this is the area of the initial box, $100\%$ covered)

n = 527754566 (total number of iterations needed)

min = 0.100438196959697 (the minimal area convex hull)

xx1 = 0.004341796875000

yy1 = 0.006481250000000

xx2 = 0.004335937500000

yy2 = -0.004335937500000

app = 0.857111765231102 (the 5 coordinates for the optimal configuration)

\end{document}